\documentclass[11pt, one side,emlines]{amsart}
\usepackage{amssymb,latexsym,xy,eucal,mathrsfs, graphicx, tikz, amsmath, caption}
\textwidth=15cm \textheight=24cm \theoremstyle{plain}
\newtheorem{lemma}{Lemma}[section]
\newtheorem{thm}[lemma]{Theorem}

\newtheorem{proposition}[lemma]{Proposition}

\theoremstyle{definition}

\newtheorem{definition}[lemma]{Definition}
\def\prfname{{\it Proof of Theorem }}

\numberwithin{equation}{section} \thispagestyle{empty} 
\voffset-55 truept 
\hoffset -20 truept

\begin{document}
	
	\title[Pendant 3-tree Connectivity of Augmented Cubes]{Pendant 3-tree Connectivity of Augmented Cubes }
	\author{S. A. Mane, S. A. Kandekar\\ }
	\address{\rm Center for Advanced Studies in Mathematics, Department of Mathematics, Savitribai Phule Pune University, Pune 411007, M.S., INDIA.}
	\email{\emph{manesmruti@yahoo.com; smitakandekar54@gmail.com}}
	\maketitle
	
	\baselineskip 20 truept
	\begin{abstract}
	The Steiner tree problem in graphs has applications in network design or circuit layout. Given a set $S$ of vertices, $|S| \geq 2,$ a tree connecting all vertices of $S$ is called an $S$-Steiner tree (tree connecting $S$). The reliability of a network $G$ to connect any $S$ vertices ($|S|$ number of vertices) in $G$ can be measure by this parameter.   
	For an $S$-Steiner tree, if the degree of each vertex in $S$ is equal to one, then that tree is called a pendant S-Steiner tree.
	Two pendant $S$-Steiner trees $T$ and $T'$ are said to be internally disjoint if $E(T) \cap E(T') = \emptyset$ and $V(T) \cap V(T') = S.$ The local pendant tree-connectivity $\tau_{G}(S)$ is the maximum number of internally disjoint pendant $S$-Steiner trees in $G.$
	For an integer $k$ with $2 \leq k \leq n,$ the pendant k-tree-connectivity is defined as $\tau_{k}(G) = min\{ \tau_{G}(S) : S \subseteq V(G), |S| = k\}.$
	In this paper, we study the pendant $3$-tree connectivity of Augmented cubes which are modification of hypercubes invented to increase the connectivity and decrease the diameter hence superior to hypercubes. We
	show that $\tau_3(AQ_n) = 2n-3.$ , which attains the upper bound of $\tau_3(G)$ given by Hager, for $G = AQ_n$.
	\end{abstract}
	\vskip.2cm
	\noindent
	{\bf Keywords:} Steiner trees, pendant $k$-tree connectivity, $n$-connected, hypercube, \\augmented cube    
	\vskip.2cm
	\noindent
	{\bf Mathematics Subject Classification (2000): } 05C40, 05C70, 68R10
	
	\section{\textbf{Introduction}}
	
In networks which are usually denoted as a graph $G$. A graph $G = (V, E)$ be a finite, undirected graph. In network design applications the edges from the edge set $E$ of the input graph $G$ typically correspond to the communication link connecting two processors where vertices from the vertex set $V$ are representing processors. 

Many useful topologies have been proposed to balance cost parameters and performance. Additionally, if the topology is the Cayley graph then it is more preferable as the Cayley graph has some attractive properties to design interconnection networks. The hypercube \cite{we} denoted by $Q_n$ is one of the most popular topologies and has been studied for parallel networks. 

Augmented cubes are derivatives of hypercubes (proposed by Choudam and Sunitha \cite{cs}) with good geometric features that retain some favorable properties of the hypercubes (since $Q_n \subset AQ_n$), such as vertex symmetry, routing and broadcasting procedures with linear time complexity. An $n-$dimensional augmented cube $AQ_n$ can be formed as an extension of $Q_n$ by adding some links. For any positive integer $n$, $AQ_n$ is $(2n-1)$-regular and  $(2n-1)$-connected (except $n = 3$) graph with $2^n$ vertices. The diameter of $AQ_n$ is about half of that of $Q_n$. Several of the graphs, which model the algorithms falling under the divide-and-conquer paradigm, are binary or binomial trees. Choudum and Sunita \cite{cs} showed that the augmented cube of dimension $n$ contains two edge-disjoint complete binary trees on $2^n -1$ vertices both rooted at the same vertex, two edge-disjoint spanning binomial trees, and all the $k$-cycles, $3 \leq k \leq 2^n.$ These properties are not shared by hypercubes and their other variations. As well not all the variations of hypercubes are Cayley graphs but augmented cubes are Cayley graphs.  Considering all these properties, $AQ_n$ can potentially be a good alternative to a hypercube. 

Steiner tree is an attractive topic for researchers in combinatorial optimization and computer science. Given a set $S$ of vertices, $|S| \geq 2,$ a tree connecting all vertices of $S$ is called an $S$-Steiner tree (tree connecting $S$). The reliability of a network $G$ to connect any $S$ vertices ($|S|$ number of vertices) in $G$ can be measure by this parameter.   
For an $S$-Steiner tree, if the degree of each vertex in $S$ is equal to one, then that tree is called a pendant $S-$Steiner tree.
Two pendant $S$-Steiner trees $T$ and $T'$ are said to be internally disjoint if $E(T) \cap E(T') = \emptyset$ and $V(T) \cap V(T') = S.$ The local pendant tree-connectivity $\tau_{G}(S)$ is the maximum number of internally disjoint pendant $S$-Steiner trees in $G.$
For an integer $k$ with $2 \leq k \leq n,$ the pendant k-tree-connectivity is defined as $\tau_{k}(G) = min\{ \tau_{G}(S) : S \subseteq V(G), |S| = k\}.$ The concept of pendant-tree connectivity was introduced by Hager \cite{hg1} in 1985, which
is specialization of generalized connectivity (or k-tree-connectivity defined by Chartrand \cite{chart1}) but a generalization of classical connectivity. In the definition of pendant tree connectivity if we remove restriction on degree of each vertex in $S$ to be one then it become generalize connectivity. Thus the only difference between generalize connectivity and pendant tree connectivity is on the degree of each vertex in $S$. The generalized $2$-connectivity $k_2(G)$ of $G$ is exactly the connectivity $k(G)$. Moreover, $k_n(G)$ is exactly the spanning tree packing number of $G.$ Therefore, the generalized connectivity is a common generalization
of the classical connectivity and spanning tree packing number. The research about $S-$Steiner trees, spanning tree packing number, generalized connectivity and pendant tree connectivity
 of graphs plays a key role in effective information transportation
in terms of parallel routing design for large-scale networks, see \cite{chart2, Chen, clm, gu, li6, lima, li5, li4, li3, lishi, lin, li2, li1, li7, Yi, ss, mao1, mao2, oe, ssp, wei, Zha, zha1, zha2, Shu}.

As a link between discrete mathematics and theoretical computer science, the algorithmic graph theory has become increasingly important in recent years. Proving by induction method is part of algorithmic graph theory. After proving generalized connectivity of augmented cubes \cite{sak}
by induction in quite easy manner here in this paper, we study the pendant $3$-tree connectivity of Augmented cubes which are modification of hypercubes invented to increase the connectivity and decrease the diameter hence superior to hypercubes. We
show that $\tau_3(AQ_n) = 2n-3.$ , which attains the upper bound of $\tau_3(G)$ given by Hager \cite{hg1}, for $G = AQ_n$.
	
\section{ \textbf{Preliminaries}}
The $n$-dimensional augmented cube is denoted by $AQ_n, n\geq 1.$ It is a graph with vertex set $\{0, 1\}^n$, the set of all binary $n$-tuples. It is defined recursively as follows.\\
$AQ_1$ is the complete graph $K_2$ with vertex set $\{0, 1\}.$ For $n \geq 2,~ AQ_n$ is obtained from two copies of $AQ_{n-1},$ denoted by $AQ^0_{n-1}$ and $AQ^1_{n-1},$ and adding $2^n$ edges between them as follows.\\
Let $V(AQ^0_{n-1}) = \{ 0x_1x_2...x_{n-1} \colon x_i = 0~~ \rm{or}~~ 1\}$ and $V(AQ^1_{n-1}) = \{1y_1y_2...y_{n-1} \colon y_i = 0~~ \rm{or}~~ 1\}.$
A vertex $x = 0x_1x_2...x_{n-1}$ of $AQ^0_{n-1}$ is joined to a vertex $y = 1y_1y_2...y_{n-1}$ of $AQ^1_{n-1}$ if and only if either 
\begin{enumerate}
	\item $x_i = y_i$ for $1 \leq i \leq n-1,$ in this case the edge $xy$ is called a hypercube edge and we set $y = x^h$ or
	
	\item $x_i = \overline{y_i}$ for $1 \leq i \leq n-1,$ in this case the edge $xy$ is called a complementary edge and we set $y = x^c.$
\end{enumerate}

Let $E_n^h$ and $E_n^c$ be the set of hypercube edges and complementary edges, respectively used to construct $AQ_n$ from two copies of $AQ_{n-1}.$ Then $E_n^h$ and $E_n^c$ are perfect matchings of $AQ_n$ and further,  $AQ_n = AQ^0_{n-1} \cup AQ^1_{n-1} \cup E_n^h \cup E_n^c.$ The augmented cubes of dimensions 1, 2 and 3 are shown in Figure $1.$ 

\hspace*{-0.5cm}
\begin{tikzpicture}[scale=1]
\draw [style= thick][blue] (0,0)--(0,2);
\draw [fill=black] (0,0) circle  (.08)  node [left]  at (0,0) {$0$};
\draw [fill=black] (0,2) circle  (.08)  node [left]  at (0,2) {$1$};


\draw [style= thick][blue] (3,0)--(3,2);
\draw [style= thick][green] (3,0)--(5,0);
\draw [style= thick][blue] (5,0)--(5,2);
\draw [style= thick][red] (5,0)--(3,2);
\draw [style= thick][red] (3,0)--(5,2);
\draw [style= thick][green] (5,2)--(3,2);
\draw [fill=black] (3,0) circle  (.08)  node [left]  at (3,0) {$00$};
\draw [fill=black] (3,2) circle  (.08)  node [left]  at (3,2) {$01$};
\draw [fill=black] (5,0) circle  (.08)  node [right]  at (5,0) {$10$};
\draw [fill=black] (5,2) circle  (.08)  node [right]  at (5,2) {$11$};


\draw [style= thick][blue] (8,0)--(8,2);
\draw [style= thick][blue] (8,0)--(10,0);
\draw [style= thick][blue] (10,0)--(10,2);
\draw [style= thick][blue] (10,0)--(8,2);
\draw [style= thick][blue] (8,0)--(10,2);
\draw [style= thick][blue] (10,2)--(8,2);

\draw [style= thick][blue] (12,0)--(12,2);
\draw [style= thick][blue] (12,0)--(14,0);
\draw [style= thick][blue] (14,0)--(14,2);
\draw [style= thick][blue] (14,0)--(12,2);
\draw [style= thick][blue] (12,0)--(14,2);
\draw [style= thick][blue] (14,2)--(12,2);

\draw [style= thick][red] (8,0)--(14,2);
\draw [style= thick][red] (14,0)--(8,2);
\draw [style= thick][red] (10,2)--(12,0);
\draw [style= thick][red] (10,0)--(12,2);

\draw [style= thick][green] (8,0)..controls(10, -0.5)..(12,0);
\draw [style= thick][green] (10,0)..controls(12,-0.5 )..(14,0);
\draw [style= thick][green] (8,2)..controls(10,2.5 )..(12,2);
\draw [style= thick][green] (10,2)..controls(12,2.5 )..(14,2);

\draw [fill=black] (8,0) circle  (.08)  node [left]  at (8,0) {$000$};
\draw [fill=black] (8,2) circle  (.08)  node [left]  at (8,2) {$001$};
\draw [fill=black] (10,0) circle  (.08)  node [right]  at (10.1,0.05) {$010$};
\draw [fill=black] (10,2) circle  (.08)  node [right]  at (10.1,1.95) {$011$};

\draw [fill=black] (12,0) circle  (.08)  node [left]  at (11.9,0.05) {$100$};
\draw [fill=black] (12,2) circle  (.08)  node [left]  at (11.9,1.95) {$101$};
\draw [fill=black] (14,0) circle  (.08)  node [right]  at (14,0) {$110$};
\draw [fill=black] (14,2) circle  (.08)  node [right]  at (14,2) {$111$};

\end{tikzpicture}

\hspace{0.05in} {$AQ_1$}  \hspace{1.1in } {$AQ_2$} \hspace{2.2in } {$AQ_3$}

\hspace{1.0in }  {Figure 1: Augmented cubes of dimensions 1, 2 and 3.}\\


In $AQ_3,$ if $u_1 = 000, u_2 = 001, u_3 = 011, u_4 = 010$ and $v_1 =100, v_2 = 101, v_3 = 111, v_4 =110,$ then $E_3^h = \{u_iv_i : i = 1, 2, 3, 4\}$ and $E_3^c = \{ u_1v_3, u_2v_4, u_3v_1, u_4v_2\}.$

From the definition, it is clear that $AQ_n$ is a $(2n-1)$-regular graph on $2^n$ vertices. It is also known that $AQ_n$ is $(2n-1)$-connected and vertex-symmetric \cite{cs}. 	

\section[{Pendant 3-tree-connectivity of Augmented Cubes}]{Pendant 3-tree-connectivity of Augmented Cubes, $\mathbf{\tau_3(AQ_n)}$ }

With the concept of tree-connectivity, Hager \cite{hg1} also introduced another tree-connectivity parameter, called the {\it pendant tree-connectivity} of a graph in \cite{hg1}. In this section, we obtain the pendant 3-tree-connectivity of $AQ_n.$ Let us recall some definitions which are required for further discussion. For this, see \cite{li7}. 

\begin{definition}[\cite{hg1}]
	For an $S$-Steiner tree, if the degree of each vertex in $S$ is equal to one, then that tree is called a pendant S-Steiner tree.

\end{definition}
	
	Two pendant $S$-Steiner trees $T$ and $T'$ are said to be internally disjoint if $E(T) \cap E(T') = \emptyset$ and $V(T) \cap V(T') = S.$ For $S \subseteq V(G)$ and $|S| \geq 2,$ the local pendant tree-connectivity $\tau_{G}(S)$ is the maximum number of internally disjoint pendant $S$-Steiner trees in $G.$

\begin{definition}[\cite{hg1}]	
	For an integer $k$ with $2 \leq k \leq n,$ the pendant k-tree-connectivity is defined as $$\tau_{k}(G) = min\{ \tau_{G}(S) : S \subseteq V(G), |S| = k\}.$$
\end{definition}
\noindent	
	By convention, 
	$\tau_{k}(G) = 0,$ when $G$ is disconnected.	
	\begin{center}
		\hspace*{-1cm}
		\begin{tikzpicture}[scale=0.9]

		\draw [style = thick][green](0,0)--(2,0);
		\draw [style = thick][green](2,0)--(2,2);
		\draw [style = thick][green](2,0)--(0,2);
		\draw [style = thick][red](4,0)--(6,0);
		\draw [style = thick] [blue] (4,2)--(6,2);
				
		\draw [style = thick] [blue](0,0)--(6,2);
		\draw [style = thick][red](6,0)--(0,2);
		\draw [style = thick][red](4,0)--(2,2);
		\draw [style = thick] [red](0,0)..controls(2,-0.5)..(4,0);
		\draw [style = thick] [blue] (0,2)..controls(2,2.5)..(4,2);
		\draw [style = thick] [blue] (2,2)..controls(4, 2.5)..(6,2);

		\draw[fill= white] (0,0) circle (0.2);
		\draw[fill= black] (0,0) circle (0.1) node [below] at (0,-0.2) {\small$000$};
		\draw[fill= white] (0,2) circle (0.2);
		\draw[fill= black] (0,2) circle (0.1) node [above] at (0,2.2) {\small$001$};
		\draw[fill= black] (2,0) circle (.08);
		\draw[fill= white] (2,2) circle (0.2);
		\draw[fill= black] (2,2) circle (0.1) node [right] at (2.2,1.9) {\small$011$};
		\draw[fill= black] (4,0) circle (.08);
		\draw[fill= black] (4,2) circle (.08);
		\draw[fill= black] (6,0) circle (.08);
		\draw[fill= black] (6,2) circle (.08);
	
	\draw [style = thick][blue](8,0)--(8,2);
	\draw [style = thick][blue](8,0)--(10,0);
	\draw [style = thick][red](10,0)--(10,2);
	\draw [style = thick][red](8,2)--(10,2);
	\draw [style = thick][green](12,0)--(12,2);
	\draw [style = thin][black](12,0)--(14,0);
	\draw [style = thick][blue](12,0)--(14,2);

	\draw [style = thick][blue](8,0)--(14,2);
	\draw [style = thick][black](14,0)--(8,2);
	\draw [style = thick][green](10,0)--(12,2);
	\draw [style = thick][red](12,0)--(10,2);
	\draw [style = thick][black] (10,0)..controls(12,-0.5)..(14,0);
	\draw [style = thick][green] (8,2)..controls(10,2.5)..(12,2);
	
	\draw[fill= black] (8,0) circle (.08);
	\draw[fill= white] (8,2) circle (0.2);
	\draw[fill= black] (8,2) circle (0.1) node [above] at (8,2.2) {\small$001$};
	\draw[fill= white] (10,0) circle (0.2);
	\draw[fill= black] (10,0) circle (0.1) node [below] at (10,-0.2) {\small$010$};
	\draw[fill= black] (10,2) circle (.08);
	\draw[fill= white] (12,0) circle (0.2);
	\draw[fill= black] (12,0) circle (0.1) node [left] at (11.8,0) {\small$100$};
	\draw[fill= black] (12,2) circle (.08);
	\draw[fill= black] (14,0) circle (.08);
	\draw[fill= black] (14,2) circle (.08);
	
\draw node [below] at (3,-0.9) {(a). $S = \{000, 001, 011\}$};
\draw node [below] at (11,-0.9
) {(b). $S = \{001, 010, 100\}$};
\draw node [below] at (7,-1.8) {Figure 2. pendant $S$-Steiner trees in $AQ_3$};
\end{tikzpicture}

\end{center}	
In above Figure 2(a), there are three pendant $S$-Steiner trees in $AQ_3$ where $S = \{000, 001, 011\}$ and in Figure 2(b), we get four pendant $S$-Steiner trees in $AQ_3$ with $S = \{001, 010, 100\}.$
		
\noindent
 Recently, Y. Mao \cite{mao1, mao2} worked on pendant tree connectivity.
It is clear that, $ \tau_{k}(G) \leq \kappa_{k}(G),~ k \geq 2.$ For the augmented cube $AQ_n$, we have $\tau_{3}(AQ_n) \leq \kappa_{3}(AQ_n) = 2n - 2.$ 
Let $S$ be a vertex set of a triangle in $AQ_n.$ Here we can not use the two edges from each vertex in $S$ to the other two vertices of $S$ in the construction of $S$-Steiner trees as the vertices of $S$ should be pendant in each tree. Thus, in this case, we can get at most $2n - 3$ pendant $3$-trees. Hence, $\displaystyle \tau_{3}(AQ_n) \leq 2n-3.$ 

In this section, we prove the existence of $2n - 3$ pendant $3$-trees in $AQ_n$ for any choice of subset $S \subset V(AQ_n)$ with $|S| = 3.$ Thus, the result is optimal.

M. Hager\cite{hg1} gave the following result about pendant k-tree-connectivity , $\tau_k(G)$, of a simple, finite graph $G.$
\begin{proposition} [\cite{hg1}] 
	Let $G$ be a graph with $\tau_k(G) \geq m.$ Then $\delta(G) \geq k + m - 1.$
\end{proposition}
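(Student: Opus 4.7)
The plan is a counting argument at a vertex of minimum degree, reading off $k-1+m$ distinct neighbors from the structure forced on $m$ internally disjoint pendant Steiner trees whose member set $S$ we are free to choose.

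First I would fix $v \in V(G)$ with $\deg(v)=\delta(G)$ and, as a warm-up, show $\deg(v)\geq k-1$. Suppose for contradiction that $\deg(v)\leq k-2$. Then form $S$ of size $k$ that contains $v$, all of $N(v)$, and $k-1-\deg(v)$ further vertices. In any would-be pendant $S$-Steiner tree $T$, the leaf $v$ connects to a unique vertex $w\in N(v)\subseteq S$, which is itself required to be a leaf of $T$; thus $\{v,w\}$ with the edge $vw$ is an isolated component of $T$. Since $|S|=k\geq 3$, the tree $T$ has at least three vertices, contradicting its connectedness. Hence $\tau_G(S)=0$, contradicting $\tau_k(G)\geq m\geq 1$.

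Now, with $\deg(v)\geq k-1$ secured, I would choose $S=\{v,u_1,\ldots,u_{k-1}\}$ with $u_1,\ldots,u_{k-1}\in N(v)$. By hypothesis there are $m$ pairwise internally disjoint pendant $S$-Steiner trees $T_1,\ldots,T_m$. In each $T_i$ the vertex $v$ is a leaf; let $w_i$ denote its unique neighbor in $T_i$. The same two-adjacent-leaves obstruction as in the warm-up rules out $w_i\in S$, so $w_i\in N(v)\setminus S$. Internal disjointness of the trees forces the edges $vw_1,\ldots,vw_m$ to be pairwise distinct, so $w_1,\ldots,w_m$ are $m$ distinct neighbors of $v$ lying outside $S$. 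Adding the $k-1$ neighbors $u_1,\ldots,u_{k-1}$ of $v$ inside $S$ gives $\deg(v)\geq (k-1)+m$, i.e., $\delta(G)\geq k+m-1$.

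The main (and essentially only) obstacle is the structural observation used in both stages, namely that a tree on three or more vertices cannot have two adjacent leaves; everything else is bookkeeping about how many distinct edges at $v$ get consumed by the $m$ trees. The argument is uniform in $k\geq 3$, which is all that is needed for the application to $\tau_3(AQ_n)$: taking $k=3$ and using $\delta(AQ_n)=2n-1$ immediately yields the upper bound $\tau_3(AQ_n)\leq 2n-3$ that this section aims to match.
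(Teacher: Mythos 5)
Your argument is correct, and it is worth noting at the outset that the paper itself offers no proof to compare against: Proposition 3.3 is simply imported from Hager's 1985 paper with a citation, and is then used only in contrapositive form with $k=3$. Your proof is the natural counting argument one would expect behind Hager's statement, and it is complete as written: the key structural fact that a tree on at least three vertices has no two adjacent leaves correctly rules out $w_i\in S$, and internal disjointness (the condition $V(T_i)\cap V(T_j)=S$ applied to the internal vertices $w_i\notin S$) does force $w_1,\dots,w_m$ to be pairwise distinct, so the count $\deg(v)\geq(k-1)+m$ goes through. Two small caveats, both of which you essentially acknowledge: the argument needs $m\geq 1$ (for $m=0$ the conclusion $\delta(G)\geq k-1$ can fail, but then the proposition is vacuous in any sensible reading), and it needs $k\geq 3$, since for $k=2$ the single edge $uv$ is itself a pendant $\{u,v\}$-Steiner tree and the two-adjacent-leaves obstruction evaporates (indeed $\tau_2(K_3)=2$ while $\delta(K_3)=2<3$, so the bound as literally stated would fail there). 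Since the paper applies the proposition only with $k=3$ and $m=2n-2\geq 1$, your proof fully supports the use made of it.
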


We require the above Proposition to prove the next result regarding pendant 3-tree-connectivity of the augmented cube $AQ_n.$
\begin{thm}
	Let $n \geq 3$ be an integer. The pendant $3$-tree-connectivity of $AQ_n,~
	\tau_3(AQ_n)$ is $2n - 3.$ 
\end{thm}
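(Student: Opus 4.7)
\medskip

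\noindent\textbf{Proof proposal.} The upper bound $\tau_3(AQ_n)\leq 2n-3$ has already been recorded in the discussion preceding the theorem (via the triangle argument). So the plan is to establish the matching lower bound $\tau_3(AQ_n)\geq 2n-3$ by induction on $n$, exploiting the recursive structure $AQ_n = AQ^0_{n-1}\cup AQ^1_{n-1}\cup E_n^h\cup E_n^c$. The base case $n=3$ reduces to exhibiting $3$ internally disjoint pendant $S$-Steiner trees for every $3$-subset $S$ of $V(AQ_3)$; by the vertex-transitivity of $AQ_3$, only a small number of orbits of triples need to be checked, and explicit constructions such as those already drawn in Figure $2$ cover them.

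For the inductive step, assume the statement for $AQ_{n-1}$, so that any $3$-set in either copy admits $2n-5$ internally disjoint pendant Steiner trees inside that copy. Given $S=\{s_1,s_2,s_3\}\subset V(AQ_n)$, split according to $|S\cap V(AQ^0_{n-1})|\in\{0,1,2,3\}$; the cases $0$ and $3$ (resp.\ $1$ and $2$) are symmetric, so two cases suffice. In Case~A, $S\subseteq V(AQ^0_{n-1})$: the inductive hypothesis provides $T_1,\dots,T_{2n-5}$ internally disjoint pendant $S$-Steiner trees wholly inside $AQ^0_{n-1}$. To build the two additional trees, I would use the perfect matchings $E_n^h$ and $E_n^c$: let $T_{2n-4}$ consist of the three hypercube edges $s_is_i^h$ together with a Steiner tree in $AQ^1_{n-1}$ connecting $\{s_1^h,s_2^h,s_3^h\}$, and let $T_{2n-3}$ consist of the three complementary edges $s_is_i^c$ together with a Steiner tree in $AQ^1_{n-1}$ connecting $\{s_1^c,s_2^c,s_3^c\}$. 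Each $s_i$ is then a pendant vertex, and $T_{2n-4},T_{2n-3}$ are automatically internally disjoint from every $T_j$ since their internal vertices all live in $AQ^1_{n-1}$.

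In Case~B, write $S=\{u_1,u_2,v\}$ with $u_1,u_2\in V(AQ^0_{n-1})$ and $v\in V(AQ^1_{n-1})$. Set $S'=\{u_1,u_2,v^h\}\subset V(AQ^0_{n-1})$ (if the three elements of $S'$ happen to coincide or if $v^h\in\{u_1,u_2\}$, handle the small degenerate sub-case directly using neighbours of $v$ and the complementary matching). The inductive hypothesis yields $2n-5$ internally disjoint pendant $S'$-Steiner trees in $AQ^0_{n-1}$; appending the single edge $v^hv$ to each converts $v^h$ into an internal vertex and $v$ into a pendant leaf, producing $2n-5$ internally disjoint pendant $S$-Steiner trees of $AQ_n$. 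Two more trees are obtained by going the other way across the boundary: one using the complementary neighbour $v^c\in V(AQ^0_{n-1})$ with a suitable pair of paths to $u_1,u_2$ inside $AQ^0_{n-1}$, and one routed through $AQ^1_{n-1}$ from $v$ to two crossing edges that terminate at $u_1,u_2$. Because $AQ^0_{n-1}$ and $AQ^1_{n-1}$ are each $(2n-3)$-connected, such paths can be chosen avoiding the internal vertices of the previously constructed trees.

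The main obstacle is the internal-disjointness bookkeeping in Case~B and in Case~A's final step: one must choose the two Steiner trees in $AQ^1_{n-1}$ connecting the triples $\{s_i^h\}$ and $\{s_i^c\}$ so that they are entirely vertex-disjoint, and in Case~B one must route the last two trees through the opposite copy without re-using internal vertices of the $2n-5$ trees produced by the inductive hypothesis. Here I would lean on Proposition~1.3 together with the high connectivity $\kappa(AQ_{n-1})=2n-3$ and on the specific structure of $E_n^h\cup E_n^c$ (each vertex has a distinct hypercube neighbour and complementary neighbour in the other copy) to guarantee that the required internally disjoint paths exist. Combining the three cases closes the induction and proves $\tau_3(AQ_n)\geq 2n-3$, which together with the matching upper bound gives the claimed equality.
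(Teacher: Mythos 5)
Your Case B collapses at its first step. If you take $2n-5$ internally disjoint pendant $S'$-Steiner trees in $AQ^0_{n-1}$ with $S'=\{u_1,u_2,v^h\}$ and append the \emph{same} edge $v^hv$ to each of them, then every resulting tree contains $v^h$ as an internal vertex and contains the edge $v^hv$; since $v^h\notin S$, any two of these trees violate both requirements $V(T)\cap V(T')=S$ and $E(T)\cap E(T')=\emptyset$. You cannot funnel all $2n-5$ trees through a single boundary vertex: each tree must cross from one copy to the other at its own private vertex, which means $z$ must be reached through $2n-3$ distinct neighbours. This is why the paper's Case 2 does not invoke the induction hypothesis at all; instead it uses the one-to-one path cover property of $AQ_{n-1}$ to take $2n-3$ internally disjoint $x$--$y$ paths in $AQ^0_{n-1}$ and a corresponding system of $2n-3$ internally disjoint paths ending at $z$ in $AQ^1_{n-1}$, and splices the $i$-th path of one system to the $i$-th path of the other through distinct matching edges. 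Your proposed repair for the remaining two trees ("paths can be chosen avoiding the internal vertices of the previously constructed trees" because $\kappa(AQ_{n-1})=2n-3$) also does not follow: Menger's theorem gives disjoint paths between two fixed vertices, not paths avoiding a prescribed large set of already-used vertices.

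Case A has a genuine gap as well, though a more localized one. For $T_{2n-4}$ and $T_{2n-3}$ to be internally disjoint you need the sets $\{s_1^h,s_2^h,s_3^h\}$ and $\{s_1^c,s_2^c,s_3^c\}$ to be disjoint, but $s_i^h=s_j^c$ can occur for $i\neq j$ (for instance $s_1=000$ and $s_2=011$ give $s_1^h=s_2^c=100$), in which case the two cross trees necessarily share an internal vertex; the "distinct hypercube and complementary neighbours" of a single vertex do not rule out these cross-coincidences. The paper's construction sidesteps this: for each $s_i$, exactly one of $s_i^h,s_i^c$ lies in $AQ^{10}_{n-2}$ and the other in $AQ^{11}_{n-2}$ (their second coordinates differ), so one extra tree is built from a Hamiltonian path of $AQ^{10}_{n-2}$ and the other from a Hamiltonian path of $AQ^{11}_{n-2}$, and the two are vertex-disjoint by construction no matter which coincidences occur among the six neighbours. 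Finally, note that Proposition 3.3 is only a degree obstruction (it yields the upper bound $\tau_3(AQ_n)\le 2n-3$); it cannot supply the disjoint routings your lower-bound argument needs, so both of your cases require an explicit construction of the kind the paper provides.
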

\begin{proof} 
\noindent
The contra-positive statement of the above Proposition 3.3 is:

\noindent
Let $G$ be a graph with $\delta(G) < k + m - 1.$ Then $\tau_k(G) < m.$

\noindent
Now $\delta(AQ_n)= 2n-1 < 3 + 2n - 2 - 1.$ Then from above result, we get $\tau_3(AQ_n) < 2n - 2.$ i.e. $\tau_3(AQ_n) \leq  2n - 3.$

Thus, it is sufficient to show that for any subset $S$ of $V(AQ_n)$ with $|S| = 3,$ there exists $2n - 3$ pendant $S$-Steiner trees in $AQ_n.$

We will prove this result by the induction on $n.$ Let $n = 3.$ Since $AQ_n$ is vertex transitive, from the following figures we get that the result is true for $n = 3.$ 

\begin{center}
	\hspace*{-1cm}
 

\newpage 
Thus, the result is true for $n=3$ and $n=4.$ Suppose by the induction hypothesis, result is true for $AQ_{n-1}$ i.e. $\tau_3(AQ_{n-1}) = 2n - 5.$ 	
Consider the canonical representation of $AQ_n$ as $AQ_n = AQ^0_{n-1} \cup AQ^1_{n-1} \cup E^h_n \cup E^c_n.$
Let $S = \{ x, y, z\}$ be a subset of $V(AQ_n)$ 
such that $|S| = 3.$ \\

\noindent
{\bf Case 1:} Suppose $x, y, z \in V(AQ^0_{n-1}).$ 

By the induction hypothesis, we get $2n - 5$ S-trees in $AQ_{n-1}^0.$ Now in $AQ_{n-1}^1,~ x^h$ is the complement of $x^c.$ As we have the decomposition of $AQ^1_{n-1}$ into two subgraphs namely $AQ^{10}_{n-2}$ and $AQ^{11}_{n-2},$ one of them should lies in $AQ^{10}_{n-2}$ and that the other in $AQ^{11}_{n-2}.$ Similar is true for $\{y^h, y^c\}$ and $\{z^h, z^c\}.$ Thus, one of the neighbour, either hypercubic or complement of each vertex of $S$ lies in $A^{10}_{n-2}$ and other in $AQ^{11}_{n-2}.$ Since $AQ_n$ is Hamiltonian connected, there exists hamiltonian path
$P_1$ in $AQ^{10}_{n-2}$ and $P_2$ in $AQ^{11}_{n-2}.$ Hence joining $x, y$ and $z$ to their neighbours on $P_1$ and on $P_2$ we get two more $S$-Steiner trees. Thus, in this case we get $2n - 3$ $S$-Steiner trees in $AQ_n,$ see Figure 4.\\

\begin{center}
	\vspace*{-0.7cm}
	\begin{tikzpicture}[scale=1.1]
	
	\draw [style = thin](0,0)--(0,6);
	\draw [style = thin](0,0)--(4,0);
	\draw [style = thin](0,6)--(4,6);
	\draw [style = thin](4,0)--(4,6);
	
	\draw [style = thin](6,0)--(6,6);
	\draw [style = thin](6,0)--(10,0);
	\draw [style = thin](6,6)--(10,6);
	\draw [style = thin](10,0)--(10,6);
	\draw [style = thin](6,3)--(10,3);
	
	\draw [fill=black] (1,5) circle  (.08)  node [left]  at (1,4.9) {$x$}; 
	\draw [fill=black] (2,5.5) circle  (.08);
	\draw [fill=black] (2,5) circle  (.08);
	\draw [fill=black] (2,4.5) circle  (.08);
	
	\draw [fill=black] (1,3) circle  (.08)  node [left]  at (1,3) {$y$}; 
	\draw [fill=black] (2,3.5) circle  (.08);
	\draw [fill=black] (2,3) circle  (.08);
	\draw [fill=black] (2,2.5) circle  (.08);
	
	\draw [fill=black] (1,1) circle  (.08)  node [left]  at (1.1,0.7) {$z$}; 
	\draw [fill=black] (2,1.5) circle (.08); 
	\draw [fill=black] (2,1) circle  (.08) ;
	\draw [fill=black] (2,0.5) circle  (.08);
	
	\draw [style = thin](1,5)--(2,5.5);
	\draw [style = thin](1,5)--(2,5);
	\draw [style = thin](1,5)--(2,4.5);
	
	\draw [style = thin](1,3) --(2,3.5);
	\draw [style = thin](1,3) --(2,3);
	\draw [style = thin](1,3) --(2,2.5);
	
	\draw [style = thin](1,1)--(2,1.5);
	\draw [style = thin](1,1)--(2,1);
	\draw [style = thin](1,1)--(2,0.5);
	
	\draw [style = thin] (2,5.5)..controls(3, 4.5)..(2,3.5);
	\draw [style = thin] (2,3.5)..controls(3, 2.5)..(2,1.5);
	
	\draw [style = thin] (2,5)..controls(3, 4)..(2,3);
	\draw [style = thin] (2,3)..controls(3, 2)..(2,1);
	
	\draw [style = thin] (2,4.5)..controls(3, 3.5)..(2,2.5);
	\draw [style = thin] (2,2.5)..controls(3, 1.5)..(2,0.5);
	
	\draw [fill=black] (7,5.5) circle  (.08)  node [right]  at (7,5.5) {$x^h$};
	\draw [fill=black] (8,4.5) circle  (.08)  node [above]  at (8,4.5) {$y^c$};
	\draw [fill=black] (9,3.5) circle  (.08)  node [right]  at (9,3.5)(1,3) {$z^h$};
	\draw [fill=black] (7,2.5) circle  (.08)  node [right]  at (7,2.5) {$z^c$};
	\draw [fill=black] (8,1.5) circle  (.08)  node [above]  at (8,1.5) {$y^h$};
	\draw [fill=black] (9,0.5) circle  (.08)  node [right]  at (9,0.5) {$x^c$};

	\draw [style = thick] [blue] plot [smooth, tension=1.5] coordinates { (7,5.5) (6.5, 3.5) (8,4.5) (9,3.5)};
	\draw [style = thick] [red] plot [smooth, tension=1.5] coordinates { (7,2.5) (6.5, 0.5) (8,1.5) (9,0.5)};
	
	\draw [style = thick] [blue] plot [smooth, tension=1.5] coordinates { (1,5) (3, 6.5) (7,5.5)};
	\draw [style = thick] [red] plot [smooth, tension=1.5] coordinates { (1,5) (1.9, 4) (9,0.5)};
	\draw [style = thick] [red] plot [smooth, tension=1.5] coordinates {  (1,3) (3, 3.7) (8,1.5)};
	\draw [style = thick] [blue] plot [smooth, tension=1.5] coordinates {  (1,3) (1.9, 2) (8,4.5)};
	\draw [style = thick] [blue] plot [smooth, tension=1.5] coordinates {  (1,1) (4, 0.2) (9,3.5)};
	\draw [style = thick] [red] plot [smooth, tension=1.5] coordinates {  (1,1) (1.9, 1.7) (7,2.5)};

	\node [below] at (2,-0.5) {$AQ^0_{n-1}$};
	\node [below] at (8,-0.5) {$AQ^1_{n-1}$};
	\node [below] at (5,-1) {Figure 4.};
	\end{tikzpicture}
	
	
\end{center}

\noindent
{\bf Case 2:} Suppose $\{x, y\} \subseteq V(AQ^0_{n-1})$ and $ z \in V(AQ^1_{n-1}).$

\noindent
{\bf Subcase 2.1:} Let $z \in \{x^h, x^c, y^h, y^c\}.$ 

Without loss of generality, suppose $z = x^h.$

\noindent
{\bf Subcase 2.1.1 :} Suppose $\{x^h , x^c\} = \{y^h , y^c\}.$ 

Observe that $x$ is adjacent to $y.$
We have a path cover $P_1, P_2, \dots, P_{2n-3}$ in between $x$ and $y$ in $AQ_{n-1}^0.$ Let $y_1, y_2, \dots, y_{2n-3}$ be the neighbours of $y$ along $P_1, P_2, \dots, P_{2n-3}$ respectively. Without loss of generality, suppose $y_1 = x.$ Then $y_1^c= x^c= y^h \in AQ_{n - 1}^1.$ Let $Q_1, Q_2, \dots, Q_{2n - 3}$ be the corresponding path cover in between $z(=x^h)$ and $y^h.$ Clearly, the neighbours $y^c_1, y^c_2, \dots, y^c_{2n -3} $ of $y^c$ lie on $Q_1, Q_2, \dots, Q_{2n - 3}$ respectively and hence $Q_1 = <y^c~,~ x^c>.$ Thus, the required $2n - 3$ pendant $S$-Steiner trees $T_1, T_2, \dots, T_{2n -3}$ are as follows:\\

\noindent
$T_i = P_i \cup \{ <y^c~,~ y_i^c>, <y_i~ ,~ y^c_i>\},$ for $2 \leq i \leq 2n -3$ and

\noindent
$T_1 = <x ~,~ x^c=y^h> \cup <y~,~ y^h=x^c > \cup\; Q_1,$ see Figure 5.

\begin{center}
	\begin{tikzpicture}[scale=1.1]
	
	\draw [style = thin](0,0)--(0,6);
	\draw [style = thin](0,0)--(4,0);
	\draw [style = thin](0,6)--(4,6);
	\draw [style = thin](4,0)--(4,6);
	
	\draw [style = thin](6,0)--(6,6);
	\draw [style = thin](6,0)--(10,0);
	\draw [style = thin](6,6)--(10,6);
	\draw [style = thin](10,0)--(10,6);
	
	\draw [style = thick] [blue] plot [smooth, tension=1.5] coordinates {  (1,5) (4.5, 4) (7,1) };
	\draw [style = thick] [blue] plot [smooth, tension=1.5]  coordinates {   (1,1) (4.5, 0.5) (7,1) };
	\draw [style = thick] [blue] plot [smooth, tension=1.5] coordinates { (7,5) (7,1) };	
	
	\draw [fill=black] (1,1) circle  (.08)  node [right]  at (0.8,0.6) {{$y$}};
	\draw [fill=black] (1,5) circle  (.08)  node [above]  at (1,5) {{$x=y_1$}}; 
	\draw [fill=black] (1.5,2) circle  (.08) node [left]  at (1.5,2) {{$y_2$}};
	\draw [fill=black] (2,2) circle  (.08) ;
	\draw [fill=black] (2.5,2) circle  (.08);
	\draw [fill=black] (3,2) circle  (.08) node [right]  at (3,2) {{$y_{2n-3}$}};

	\draw  [style = dotted] plot [smooth, tension=1.5] coordinates { (1,1) (1,5) };
	\draw  plot [smooth, tension=1.5] coordinates { (1,1) (1.5,2) };
	\draw  plot [smooth, tension=1.5] coordinates { (1,1) (2,2) };
	\draw  plot [smooth, tension=1.5] coordinates { (1,1) (2.5,2) };
	\draw  plot [smooth, tension=1.5] coordinates { (1,1) (3,2) };

	\draw plot [smooth, tension=1.5] coordinates { (1,5) (1.7 , 3.5) (1.5,2)};
	\draw plot [smooth, tension=1.5] coordinates {  (1,5) (2 , 3.5) (2,2)};
	\draw plot [smooth, tension=1.5] coordinates {   (1,5) (2.2 , 3.5) (2.5,2)};
	\draw plot [smooth, tension=1.5] coordinates {   (1,5) (2.5 , 3.5) (3,2)};


	\draw [fill=black] (7.5,4) circle  (.08) node [above]  at (7.6,3.9) {{$y_2^c$}};	
	\draw [fill=black] (7,1) circle  (.08)  node [right]  at (7,1) {{$y^h($=$x^c)$}};
	\draw [fill=black] (8,4) circle  (.08);
	\draw [fill=black] (7,5) circle  (.08)  node [right]  at (7,5.1) {{$z$=$x^h($=$y^c)$}};
	\draw [fill=black] (8.5,4) circle  (.08);
	\draw [fill=black] (9,4) circle  (.08) node [right]  at (9,4.1) {{$y_{2n-3}^c$}};

	\draw plot [smooth, tension=1.5] coordinates { (7,5) (8.5,4) };
	\draw plot [smooth, tension=1.5] coordinates { (7,5) (7.5,4) };
	\draw plot [smooth, tension=1.5] coordinates {  (7,5) (9,4) };
	\draw plot [smooth, tension=1.5] coordinates {  (7,5) (8,4) };
	
	\draw [style = dotted] plot [smooth, tension=1.5] coordinates { (8.5,4) (8.4,2.6) (7,1)};
	\draw [style = dotted] plot [smooth, tension=1.5] coordinates { (7.5,4) (7.5,2.6) (7,1)};
	\draw [style = dotted] plot [smooth, tension=1.5] coordinates { (9,4) (8.8,2.6) (7,1)};
	\draw [style = dotted] plot [smooth, tension=1.5] coordinates { (8,4) (8,2.6) (7,1)};


	\draw  plot [smooth, tension=1.5] coordinates { (2.5,2) (6, 2) (8.5,4) };
	\draw  plot [smooth, tension=1.5] coordinates {  (1.5,2) (5, 2) (7.5,4) };
	\draw  plot [smooth, tension=1.5] coordinates {   (3,2) (6.5, 2) (9,4) };
	\draw  plot [smooth, tension=1.5] coordinates {  (2,2) (5.5, 2) (8,4) };
	
	\end{tikzpicture}
	
	\hspace{0.2 in} {$AQ_{n-1}^0$} \hspace{1.7 in} {$AQ_{n-1}^1$}
	
	\hspace{0.3 in} {$ \mathrm{Figure}\,\, 5.$}
	
\end{center}

{\bf Subcase 2.1.2:}
Suppose $\{x^h, x^c\} \neq \{y^h, y^c\}$ and $z$ is not adjacent to $y^h$ and $y^c$ both. Hence, $x$ is not adjacent to $y.$ We know that $AQ_n$ has one-to-one path cover of order k,  $1 \leq k \leq 2n - 1$ in between any pair of vertices. Therefore, we get a path cover $P_1, P_2, \dots, P_{2n-3}$ in between $x$ and $y$ in $AQ_{n-1}^0.$ Let $y_1, y_2, \dots, y_{2n-3}$ be the neighbours of $y$ along $P_1, P_2, \dots, P_{2n-3}$ respectively. In $AQ_{n-1}^1,$ let $Q_1, Q_2, \dots, Q_{2n - 3}$ be the corresponding path cover in between $z= x^h$  and $y^h$ such that $y^h_1, y^h_2, \dots, y^h_{2n -3} $ are neighbors of $y^h$ along $Q_1, Q_2, \dots, Q_{2n - 3}$ respectively. Thus the required $2n - 3$ pendant $S$-Steiner trees $T_1, T_2, \dots, T_{2n -3}$ are obtained as follows:\\

\noindent
$T_i = P_i \cup \{Q_i\backslash <y^h~,~ y_i^h>\} ~\cup~ <y_i~ ,~ y^h_i>,$ for $1 \leq i \leq 2n -3,$

\noindent
see Figure 6.\\

\begin{center}
\begin{tikzpicture}[scale=0.9]

\draw [style = thin](0,0)--(0,6);
\draw [style = thin](0,0)--(4,0);
\draw [style = thin](0,6)--(4,6);
\draw [style = thin](4,0)--(4,6);

\draw [style = thin](6,0)--(6,6);
\draw [style = thin](6,0)--(10,0);
\draw [style = thin](6,6)--(10,6);
\draw [style = thin](10,0)--(10,6);

\draw [fill=black] (2,5) circle  (.08)  node [above]  at (2,5) {{$x$}};

\draw [fill=black] (2,1) circle  (.08)  node [below]  at (2,1) {{$y$}}; 
\draw [fill=black] (2.5,2) circle  (.08);
\draw [fill=black] (1.5,2) circle  (.08);
\draw [fill=black] (1,2) circle  (.08) node [left]  at (1,2) {{$y_1$}};
\draw [fill=black] (3,2) circle  (.08) node [right]  at (3,2) {{$y_{2n-3}$}};
\draw [fill=black] (2,2) circle  (.08);

\draw plot [smooth, tension=1.5] coordinates { (2,1) (2.5,2) };
\draw plot [smooth, tension=1.5] coordinates {  (2,1) (1.5,2) };
\draw plot [smooth, tension=1.5] coordinates {   (2,1) (1,2) };
\draw plot [smooth, tension=1.5] coordinates {   (2,1) (3,2) };
\draw plot [smooth, tension=1.5] coordinates {   (2,1) (2,2) };

\draw plot [smooth, tension=1.5] coordinates { (2.5,2) (2.5,3.5) (2,5)};
\draw plot [smooth, tension=1.5] coordinates {  (1.5,2) (1.5,3.5) (2,5)};
\draw plot [smooth, tension=1.5] coordinates {   (1,2) (1,3.5) (2,5)};
\draw plot [smooth, tension=1.5] coordinates {   (3,2) (3,3.5) (2,5)};
\draw plot [smooth, tension=1.5] coordinates {   (2,2) (2,3.5) (2,5)};


\draw [fill=black] (8,5) circle  (.08)  node [above]  at (8,5) {{$z$}}; 
\draw [fill=black] (8.5,2) circle  (.08);
\draw [fill=black] (7.5,2) circle  (.08);
\draw [fill=black] (7,2) circle  (.08) node [left]  at (7,2) {{$y_1^h$}};
\draw [fill=black] (9,2) circle  (.08) node [right]  at (9,2) {{$y_{2n-3}^h$}};
\draw [fill=black] (8,2) circle  (.08);

\draw [fill=black] (8,1) circle  (.08)  node [below]  at (8,1) {{$y^h$}};

\draw [style = dotted] plot [smooth, tension=1.5] coordinates { (8,1) (8.5,2) };
\draw [style = dotted] plot [smooth, tension=1.5] coordinates {  (8,1) (7.5,2) };
\draw [style = dotted] plot [smooth, tension=1.5] coordinates {   (8,1) (7,2) };
\draw [style = dotted] plot [smooth, tension=1.5] coordinates {   (8,1) (9,2) };
\draw [style = dotted] plot [smooth, tension=1.5] coordinates {  (8,1) (8,2) };

\draw plot [smooth, tension=1.5] coordinates { (8.5,2) (8.5 , 3.5) (8,5)};
\draw plot [smooth, tension=1.5] coordinates {  (7.5,2) (7.5 , 3.5) (8,5)};
\draw plot [smooth, tension=1.5] coordinates {   (7,2) (7 , 3.5) (8,5)};
\draw plot [smooth, tension=1.5] coordinates {   (9,2) (9 , 3.5) (8,5)};
\draw plot [smooth, tension=1.5] coordinates {   (8,2) (8 , 3.5) (8,5)};

\draw  plot [smooth, tension=1.5] coordinates { (2.5,2) (5, 3
	) (8.5,2) };
\draw  plot [smooth, tension=1.5] coordinates {  (1.5,2) (5, 3) (7.5,2) };
\draw  plot [smooth, tension=1.5] coordinates {   (1,2) (5, 3) (7,2) };
\draw  plot [smooth, tension=1.5] coordinates {   (3,2) (5, 3) (9,2) };
\draw  plot [smooth, tension=1.5] coordinates {  (2,2) (5, 3) (8,2) };

\end{tikzpicture}

\hspace{-0.1 in} {$AQ_{n-1}^0$} \hspace{1.5 in} {$AQ_{n-1}^1$}

\hspace{0.3 in} {$ \mathrm{Figure}\,\, 6.$}
\end{center}

{\bf Subcase 2.1.3:} Suppose $\{x^h, x^c\} \neq \{y^h, y^c\}$ and $z$ is adjacent to $y^h$ or both $y^h$ and $y^c$ in $AQ_{n-1}^1.$ Then $y$ is adjacent to $z^h= x$ in $AQ_{n-1}^0.$ We know that $AQ_n$ has one-to-one path cover of order k, $1 \leq k \leq 2n - 1$ in between any pair of vertices. Thus, we get a path cover $P_1, P_2, \dots, P_{2n-3}$ in between $x$ and $y$ in $AQ_{n-1}^0.$ Let $x_1, x_2, \dots, x_{2n-3}$ be the neighbours of $x$ along $P_1, P_2, \dots, P_{2n-3}$ respectively. Similarly, in $AQ_{n-1}^1,$ we get a path cover $Q_1, Q_2, \dots, Q_{2n - 3}$ in between $x^c$ and $z$ such that neighbors $x^c_1, x^c_2, \dots, x^c_{2n -3} $ of $x^c$ lie on $Q_1, Q_2, \dots, Q_{2n - 3}$ respectively. Since $x$ is adjacent to $y$ in $AQ^0_{n-1},$ without loss of generality, we assume that $x_1 = y$ which gives $x_1^c = y^c$ in $AQ^1_{n-1}.$ Also without loss of generality, assume that $x_2 = x^{ch}$ in $AQ^0_{n-1}.$ Hence, $P_1 = \,\,<x~,~y>$ and $Q_2 =\,\, <x^c~,~x^h>.$ The required $2n - 3$ pendant $S$-Steiner trees $T_1, T_2, \dots, T_{2n -3}$ are as follows:\\

\noindent
$T_i = P_i \cup \{Q_i\backslash <x^c~,~ x_i^c>\}~ \cup <x_i~ ,~ x^c_i>,$ for $3 \leq i \leq 2n -3,$

\noindent
$T_1 = P_2 ~\cup <x_2 ~,~ z(=x^h)>$ and
$T_2 =~ <x~,~ x^c> \cup <y~, ~x_1^c (= y^c)> \cup~ Q_1,$ \\ see Figure 7.

\begin{center}
	\begin{tikzpicture}[scale=0.9]
	
	\draw [style = thin](0,0)--(0,6);
	\draw [style = thin](0,0)--(4,0);
	\draw [style = thin](0,6)--(4,6);
	\draw [style = thin](4,0)--(4,6);
	
	\draw [style = thin](6,0)--(6,6);
	\draw [style = thin](6,0)--(10,0);
	\draw [style = thin](6,6)--(10,6);
	\draw [style = thin](10,0)--(10,6);
	%
	
	\draw [fill=black] (1,1) circle  (.08)  node [right]  at (0.8,0.6) {{$x$}};
	\draw [fill=black] (1,5) circle  (.08)  node [above]  at (1,5) {{$y=x_1$}}; 
	\draw [fill=black] (1.5,2) circle  (.08) node [left]  at (1.8,2.3) {{$(z^c$=$x^{ch}$=)$x_2$}};
	\draw [fill=black] (2,2) circle  (.08) node [left]  at (2.4,2.2) {{$x_3$}};
	\draw [fill=black] (2.5,2) circle  (.08);
	\draw [fill=black] (3,2) circle  (.08) node [right]  at (2.9,2.1) {{$x_{2n-3}$}};

	\draw  [style = dotted] plot [smooth, tension=1.5] coordinates { (1,1) (1,5) };
	\draw  [style = thick] [red] plot [smooth, tension=1.5] coordinates { (1,1) (1.5,2) };
	\draw  plot [smooth, tension=1.5] coordinates { (1,1) (2,2) };
	\draw  plot [smooth, tension=1.5] coordinates { (1,1) (2.5,2) };
	\draw  plot [smooth, tension=1.5] coordinates { (1,1) (3,2) };

	\draw [style = thick] [red] plot [smooth, tension=1.5] coordinates { (1,5) (1.7 , 3.5) (1.5,2)};
	\draw plot [smooth, tension=1.5] coordinates {  (1,5) (2 , 3.5) (2,2)};
	\draw plot [smooth, tension=1.5] coordinates {   (1,5) (2.2 , 3.5) (2.5,2)};
	\draw plot [smooth, tension=1.5] coordinates {   (1,5) (2.5 , 3.5) (3,2)};

	
	\draw [fill=black] (7.5,4) circle  (.08) node [above]  at (7.6,3.9) {{$x_3^c$}};	
	\draw [fill=black] (7,1) circle  (.08)  node [right]  at (7,1) {{$z($=$x^h$=$x_2^c)$}};
	\draw [fill=black] (8,4) circle  (.08);
	\draw [fill=black] (7,5) circle  (.08)  node [right]  at (7,5.1) {{$x^c$}};
	\draw [fill=black] (8.5,4) circle  (.08) node [right]  at (8.5,4.1) {{$x_{2n - 3}^c$}};
	\draw [fill=black] (6.5,4) circle  (.08) node [left]  at (6.5,3.8) {{$x_1^c(=y^c)$}};

	\draw [style = dotted] plot [smooth, tension=1.5] coordinates { (7,5) (8.5,4) };
	\draw [style = dotted] plot [smooth, tension=1.5] coordinates { (7,5) (7.5,4) };
	\draw [style = dotted]  plot [smooth, tension=1.5] coordinates { (7,5) (7,1) };
	\draw [style = thick][blue] plot [smooth, tension=1.5] coordinates {  (7,5) (6.5,4) };
	\draw [style = dotted] plot [smooth, tension=1.5] coordinates {  (7,5) (8,4) };
	
	\draw plot [smooth, tension=1.5] coordinates { (8.5,4) (8.4,2.6) (7,1)};
	\draw plot [smooth, tension=1.5] coordinates {  (7.5,4) (7.5,2.6) (7,1)};
	\draw [style = thick][blue]  plot [smooth, tension=1.5] coordinates {   (6.5,4) (6.5,2.6) (7,1)};
	\draw plot [smooth, tension=1.5] coordinates {   (8,4) (8,2.6) (7,1)};


	\draw  plot [smooth, tension=1.5] coordinates { (2.5,2) (6, 2) (8,4) };
	\draw  [style = thick] [red] plot [smooth, tension=1.5] coordinates {  (1.5,2) (4, 0.8) (7,1) };
	\draw [style = thick] [blue] plot [smooth, tension=1.5]  coordinates {   (1,1) (5, 2) (7,5) };
	\draw  plot [smooth, tension=1.5] coordinates {   (3,2) (6.5, 2) (8.5,4) };
	\draw  plot [smooth, tension=1.5] coordinates {  (2,2) (5.5, 2) (7.5,4) };
	\draw [style = thick] [blue] plot [smooth, tension=1.5] coordinates {  (1,5) (4, 4.9) (6.5, 4)};
	
	\draw node [below] at (2,-0.5) {$AQ_{n-1}^0$};
	\draw node [below] at (8,-0.5) {$AQ_{n-1}^1$};
	
	\end{tikzpicture}
	

	\hspace{0.3 in} {$ \mathrm{Figure}\,\, 7.$}
	
\end{center}

\noindent
Similarly, we get $2n - 3$ pendant $S$-Steiner trees in the augmented cube $AQ_n$ if $z = x^c, y^c$ or $y^h.$\\

\noindent
{\bf Subcase 2.2 :} Let $z \notin \{x^c, x^h, y^c, y^h\}.$\\

\noindent
{\bf Subcase 2.2.1 :} Consider, $\{x^h , x^c\} = \{y^h , y^c\}.$ Observe that in this case, $x$ is adjacent to $y$ and $x^h = y^c, x^c= y^h.$\\

\noindent
{\bf Subcase 2.2.1(a) :} Suppose $z$ is adjacent to one of the vertex $x^h$ or $x^c$. Without loss of generality, suppose $z$ is adjacent to $x^c.$ Now we have a path cover $P_1, P_2, \dots, P_{2n-3}$ in between $x$ and $y$ in $AQ_{n-1}^0.$ Let $y_1, y_2, \dots, y_{2n-3}$ be the neighbours of $y$ along $P_1, P_2, \dots, P_{2n-3}$ respectively. Without loss of generality, suppose $y_1 = x.$ Thus, $P_1 =\,\,<x~,~ y>.$ Then $y_1^c= x^c=y^h \in AQ_{n - 1}^1.$ Consider a path cover $Q_1, Q_2, \dots, Q_{2n - 3}$ in between $y^c(= x^h)$ and $z$ such that neighbours $y^c_1, y^c_2, \dots, y^c_{2n -3} $ of $y^c$ lie on $Q_1, Q_2, \dots, Q_{2n - 3}$ respectively. Thus the required $2n - 3$ pendant $S$-Steiner trees $T_1, T_2, \dots, T_{2n -3}$ are as follows:\\

\noindent
$T_i = P_i \cup \{Q_i\backslash <y^c~,~ y_i^c>\}~ \cup <y_i~ ,~ y^c_i>,$ for $2 \leq i \leq 2n -3.$

\noindent
$T_1 =\,\, <x~ ,~ x^c=y^h> \cup <y ~,~ y^h=x^c > \cup ~\{Q_1 \backslash <x^c ~, ~x^h>\},$\\ see Figure 8.\\

\begin{center}
	\begin{tikzpicture}[scale= 1]
	
	\draw [style = thin](0,0)--(0,6);
	\draw [style = thin](0,0)--(4,0);
	\draw [style = thin](0,6)--(4,6);
	\draw [style = thin](4,0)--(4,6);
	
	\draw [style = thin](6,0)--(6,6);
	\draw [style = thin](6,0)--(10,0);
	\draw [style = thin](6,6)--(10,6);
	\draw [style = thin](10,0)--(10,6);
	%
	
	\draw [fill=black] (1,1) circle  (.08)  node [right]  at (0.8,0.6) {{$y$}};
	\draw [fill=black] (1,5) circle  (.08)  node [above]  at (1,5) {{$x=y_1$}}; 
	\draw [fill=black] (1.5,2) circle  (.08) node [left]  at (1.5,2) {{$y_2$}};
	\draw [fill=black] (2,2) circle  (.08) ;
	\draw [fill=black] (2.5,2) circle  (.08);
	\draw [fill=black] (3,2) circle  (.08) node [right]  at (3,2) {{$y_{2n-3}$}};

	\draw  [style = dotted] plot [smooth, tension=1.5] coordinates { (1,1) (1,5) };
	\draw  plot [smooth, tension=1.5] coordinates { (1,1) (1.5,2) };
	\draw  plot [smooth, tension=1.5] coordinates { (1,1) (2,2) };
	\draw  plot [smooth, tension=1.5] coordinates { (1,1) (2.5,2) };
	\draw  plot [smooth, tension=1.5] coordinates { (1,1) (3,2) };

	\draw plot [smooth, tension=1.5] coordinates { (1,5) (1.7 , 3.5) (1.5,2)};
	\draw plot [smooth, tension=1.5] coordinates {  (1,5) (2 , 3.5) (2,2)};
	\draw plot [smooth, tension=1.5] coordinates {   (1,5) (2.2 , 3.5) (2.5,2)};
	\draw plot [smooth, tension=1.5] coordinates {   (1,5) (2.5 , 3.5) (3,2)};

	
	\draw [fill=black] (8,5) circle  (.08)  node [above]  at (8,5) {$y^c(=x^h)$}; 
	\draw [fill=black] (8.5,4) circle  (.08);
	\draw [fill=black] (7.5,4) circle  (.08);
	\draw [fill=black] (6.5,3.5) circle  (.08) node [above]  at (6.4,3.3) {{$x^c (= y^h )$}};
	\draw [fill=black] (9,4) circle  (.08) node [right]  at (9,4) {{$y_{2n-3}^c$}};
	\draw [fill=black] (8,4) circle  (.08);
	
	\draw [fill=black] (8,1) circle  (.08)  node [below]  at (8,1) {$z$};
	
	\draw [style = dotted] plot [smooth, tension=1.5] coordinates { (8,5) (8.5,4) };
	\draw [style = dotted] plot [smooth, tension=1.5] coordinates {  (8,5) (7.5,4) };
	\draw  [style = dotted] plot [smooth, tension=1.5] coordinates {   (8,5) (6.5,3.5)};
	\draw [style = dotted] plot [smooth, tension=1.5] coordinates {   (8,5) (9,4) };
	\draw [style = dotted] plot [smooth, tension=1.5] coordinates {  (8,5) (8,4) };
	
	\draw plot [smooth, tension=1.5] coordinates { (8.5,4) (8.5 , 2.5) (8,1)};
	\draw plot [smooth, tension=1.5] coordinates {  (7.5,4) (7.5 , 2.5) (8,1)};
	\draw [style = thick] [blue] plot [smooth, tension=1.5] coordinates { (6.5,3.5) (8,1)};
	\draw plot [smooth, tension=1.5] coordinates {   (9,4) (9 , 2.5) (8,1)};
	\draw plot [smooth, tension=1.5] coordinates {   (8,4) (8 , 2.5) (8,1)};


	\draw  plot [smooth, tension=1.5] coordinates { (2.5,2) (6, 2) (8.5,4) };
	\draw  plot [smooth, tension=1.5] coordinates {  (1.5,2) (5, 2) (7.5,4) };
	\draw [style = thick] [blue] plot [smooth, tension=1.5]  coordinates {   (1,1) (5, 1.5) (6.5,3.5) };
	\draw  plot [smooth, tension=1.5] coordinates {   (3,2) (6.5, 2) (9,4) };
	\draw  plot [smooth, tension=1.5] coordinates {  (2,2) (5.5, 2) (8,4) };
	\draw [style = thick] [blue] plot [smooth, tension=1.5] coordinates {  (1,5) (4.5, 4.5) (6.5,3.5) };
	
	\draw node [below] at (2,-0.5) {$AQ_{n-1}^0$};
	\draw node [below] at (8,-0.5) {$AQ_{n-1}^1$};
	
	\end{tikzpicture}
	
	
	\hspace{0.3 in} {$ \mathrm{Figure}\,\, 8.$}
	
\end{center}  

{\bf Subcase 2.2.1(b) :} Suppose $z$ is adjacent to both $x^h$ and $x^c$. Since $z$ is adjacent to $x^h = y^c$ in $AQ_{n-1}^1, \,\, z^c$ is adjacent to $y$ in $AQ_{n-1}^0.$ Now we have a path cover $P_1, P_2, \dots, P_{2n-3}$ in between $x$ and $y$ in $AQ_{n-1}^0.$ Let $y_1, y_2, \dots, y_{2n-3}$ be the neighbours of $y$ along $P_1, P_2, \dots, P_{2n-3}$ respectively. Without loss of generality, suppose $y_1 = z^c$ and $y_2 = x$ Thus, $P_2 = <x~,~y>.$ Then $y_1^c= z$ and $y_2^c = x^c$ in $AQ_{n - 1}^1.$ Consider a path cover $Q_1, Q_2, \dots, Q_{2n - 3}$ in between $y^c(= x^h)$ and $z$ such that the neighbours $y^c_1, y^c_2, \dots, y^c_{2n -3} $ of $y^c$ lie on $Q_1, Q_2, \dots, Q_{2n - 3}$ respectively. The required $2n - 3$ pendant $S$-Steiner trees $T_1, T_2, \dots, T_{2n -3}$ are as follows:\\

\noindent
$T_i = P_i ~\cup \{Q_i\backslash <y^c~,~ y_i^c>\}~ \cup <y_i~ ,~ y^c_i>,$ for $3 \leq i \leq 2n -3.$

\noindent
$T_1 = P_1~ \cup <z^c~ ,~ z> $ and $T_2 = \,\,<x ~,~ x^h> \cup <y ~,~ y^h> \cup~ Q_2,$\\ see Figure 9. \\

\begin{center}
	\begin{tikzpicture}[scale= 1]
	
	\draw [style = thin](0,0)--(0,6);
	\draw [style = thin](0,0)--(4,0);
	\draw [style = thin](0,6)--(4,6);
	\draw [style = thin](4,0)--(4,6);
	
	\draw [style = thin](6,0)--(6,6);
	\draw [style = thin](6,0)--(10,0);
	\draw [style = thin](6,6)--(10,6);
	\draw [style = thin](10,0)--(10,6);

	\draw [style = thick] [blue] plot [smooth, tension=1.5] coordinates {  (1,5) (4.5, 4) (7,5) };
	\draw [style = thick] [red] plot [smooth, tension=1.5]  coordinates {   (0.5,2) (4.2, 2.5) (7,1) };
	\draw [style = thick] [blue] plot [smooth, tension=1.5]  coordinates {   (1,1) (4.5, 1) (7.5 , 3) };	
	

	\draw  [style = dotted] plot [smooth, tension=1.5] coordinates { (1,1) (1,5) };
	\draw  [style = thick][red] plot [smooth, tension=1.5] coordinates { (1,1) (0.5,2) };
	\draw  plot [smooth, tension=1.5] coordinates { (1,1) (1.5,2) };
	\draw  plot [smooth, tension=1.5] coordinates { (1,1) (2,2) };
	\draw  plot [smooth, tension=1.5] coordinates { (1,1) (2.5,2) };
	\draw  plot [smooth, tension=1.5] coordinates { (1,1) (3,2) };

	\draw [style = thick][red] plot [smooth, tension=1.5] coordinates { (1,5) (0.4 , 3.5) (0.5,2)};
	\draw plot [smooth, tension=1.5] coordinates { (1,5) (1.7 , 3.5) (1.5,2)};
	\draw plot [smooth, tension=1.5] coordinates {  (1,5) (2 , 3.5) (2,2)};
	\draw plot [smooth, tension=1.5] coordinates {   (1,5) (2.2 , 3.5) (2.5,2)};
	\draw plot [smooth, tension=1.5] coordinates {   (1,5) (2.5 , 3.5) (3,2)};
	
	\draw [fill=black] (1,1) circle  (.08)  node [right]  at (0.8,0.6) {{$y$}};
	\draw [fill=black] (0.5,2) circle  (.08) node [left]  at (0.6,2) {{$z^c=y_1$}};
	\draw [fill=black] (1,5) circle  (.08)  node [above]  at (1,5) {{$x=y_2$}}; 
	\draw [fill=black] (1.5,2) circle  (.08) node [left]  at (1.6,2) {{$y_3$}};
	\draw [fill=black] (2,2) circle  (.08) ;
	\draw [fill=black] (2.5,2) circle  (.08);
	\draw [fill=black] (3,2) circle  (.08) node [right]  at (3,2.1) {{$y_{2n-3}$}};


	\draw [style = dotted] plot [smooth, tension=1.5] coordinates { (7,5) (8.5,4) };
	\draw [style = thick] [blue] plot [smooth, tension=1.5] coordinates { (7,5) (7.5,3) };
	\draw [style = dotted] plot [smooth, tension=1.5] coordinates { (7,5) (7,1) };
	\draw [style = dotted] plot [smooth, tension=1.5] coordinates {  (7,5) (9,4) };
	\draw [style = dotted] plot [smooth, tension=1.5] coordinates {  (7,5) (8,4) };
	\draw [style = dotted] plot [smooth, tension=1.5] coordinates {  (7,5) (9.5,4) };
	
	\draw plot [smooth, tension=1.5] coordinates { (8.5,4) (8.3,2.2) (7,1)};
	\draw [style = thick] [blue] plot [smooth, tension=1.5] coordinates {  (7.5,3) (7,1)};
	\draw plot [smooth, tension=1.5] coordinates {   (9,4) (8.6,2.1) (7,1)};
	\draw plot [smooth, tension=1.5] coordinates {   (8,4) (8,2.6) (7,1)};
	\draw plot [smooth, tension=1.5] coordinates { (9.5,4) (8.9,2) (7,1)};
	
	\draw [fill=black] (7.5,3) circle  (.08) node [left] at (7.6,3) {{$x^c(=y^h)$}};	
	\draw [fill=black] (7,1) circle  (.08)  node [below]  at (7,1) {{$z$}};
	\draw [fill=black] (8,4) circle  (.08) node [above]  at (8,3.9) {{$y_3^c$}};;
	\draw [fill=black] (7,5) circle  (.08)  node [right]  at (7,5.1) {{$y^c($=$x^h)$}};
	\draw [fill=black] (8.5,4) circle  (.08);
	\draw [fill=black] (9,4) circle (.08);
	\draw [fill=black] (9.5,4) circle (.08) node [right] at (9.5,4.1) {{$y_{2n-3}^c$}};


	\draw  plot [smooth, tension=1.5] coordinates { (2.5,2) (6, 2) (9,4) };
	\draw  plot [smooth, tension=1.5] coordinates {  (1.5,2) (5, 2) (8,4) };
	\draw  plot [smooth, tension=1.5] coordinates {   (3,2) (6.5, 2) (9.5,4) };
	\draw  plot [smooth, tension=1.5] coordinates {  (2,2) (5.5, 2) (8.5,4) };

	\draw node [below] at (2,-0.5) {$AQ_{n-1}^0$};
	\draw node [below] at (8,-0.5) {$AQ_{n-1}^1$};
	
	\end{tikzpicture}
	

	\hspace{0.3 in} {$ \mathrm{Figure}\,\, 9.$}
	
\end{center}

{\bf Subcase 2.2.2 :} Suppose $x$ is not adjacent to $y.$\\

{\bf Subcase 2.2.2(a) :} If $z$ is not adjacent to all $x^h, x^c, y^h,$ and $y^c.$ Then by the same argument in Subcase 2.1.2 of this theorem, we get the required $2n - 3$ pendant $S$-Steiner trees,

see Figure 10.\\

\begin{center}
	\begin{tikzpicture}[scale=1]
	
	\draw [style = thin](0,0)--(0,6);
	\draw [style = thin](0,0)--(4,0);
	\draw [style = thin](0,6)--(4,6);
	\draw [style = thin](4,0)--(4,6);
	
	\draw [style = thin](6,0)--(6,6);
	\draw [style = thin](6,0)--(10,0);
	\draw [style = thin](6,6)--(10,6);
	\draw [style = thin](10,0)--(10,6);

	\draw [fill=black] (2,5) circle  (.08)  node [above]  at (2,5) {{$x$}};

	\draw [fill=black] (2,1) circle  (.08)  node [below]  at (2,1) {{$y$}}; 
	\draw [fill=black] (2.5,2) circle  (.08);
	\draw [fill=black] (1.5,2) circle  (.08);
	\draw [fill=black] (1,2) circle  (.08) node [left]  at (1,2) {{$y_1$}};
	\draw [fill=black] (3,2) circle  (.08) node [right]  at (3,2) {{$y_{2n-3}$}};
	\draw [fill=black] (2,2) circle  (.08);
	
	\draw plot [smooth, tension=1.5] coordinates { (2,1) (2.5,2) };
	\draw plot [smooth, tension=1.5] coordinates {  (2,1) (1.5,2) };
	\draw plot [smooth, tension=1.5] coordinates {   (2,1) (1,2) };
	\draw plot [smooth, tension=1.5] coordinates {   (2,1) (3,2) };
	\draw plot [smooth, tension=1.5] coordinates {   (2,1) (2,2) };
	
	\draw plot [smooth, tension=1.5] coordinates { (2.5,2) (2.5,3.5) (2,5)};
	\draw plot [smooth, tension=1.5] coordinates {  (1.5,2) (1.5,3.5) (2,5)};
	\draw plot [smooth, tension=1.5] coordinates {   (1,2) (1,3.5) (2,5)};
	\draw plot [smooth, tension=1.5] coordinates {   (3,2) (3,3.5) (2,5)};
	\draw plot [smooth, tension=1.5] coordinates {   (2,2) (2,3.5) (2,5)};
	
	
	\draw [fill=black] (8,5) circle  (.08)  node [above]  at (8,5) {{$z$}}; 
	\draw [fill=black] (8.5,2) circle  (.08);
	\draw [fill=black] (7.5,2) circle  (.08);
	\draw [fill=black] (7,2) circle  (.08) node [left]  at (7,2) {{$y_1^h$}};
	\draw [fill=black] (9,2) circle  (.08) node [right]  at (9,2) {{$y_{2n-3}^h$}};
	\draw [fill=black] (8,2) circle  (.08);
	
	\draw [fill=black] (8,1) circle  (.08)  node [below]  at (8,1) {{$y^h$}};
	
	\draw [style = dotted] plot [smooth, tension=1.5] coordinates { (8,1) (8.5,2) };
	\draw [style = dotted] plot [smooth, tension=1.5] coordinates {  (8,1) (7.5,2) };
	\draw [style = dotted] plot [smooth, tension=1.5] coordinates {   (8,1) (7,2) };
	\draw [style = dotted] plot [smooth, tension=1.5] coordinates {   (8,1) (9,2) };
	\draw [style = dotted] plot [smooth, tension=1.5] coordinates {  (8,1) (8,2) };
	
	\draw plot [smooth, tension=1.5] coordinates { (8.5,2) (8.5 , 3.5) (8,5)};
	\draw plot [smooth, tension=1.5] coordinates {  (7.5,2) (7.5 , 3.5) (8,5)};
	\draw plot [smooth, tension=1.5] coordinates {   (7,2) (7 , 3.5) (8,5)};
	\draw plot [smooth, tension=1.5] coordinates {   (9,2) (9 , 3.5) (8,5)};
	\draw plot [smooth, tension=1.5] coordinates {   (8,2) (8 , 3.5) (8,5)};

	\draw  plot [smooth, tension=1.5] coordinates { (2.5,2) (5, 3) (8.5,2) };
	\draw  plot [smooth, tension=1.5] coordinates {  (1.5,2) (5, 3) (7.5,2) };
	\draw  plot [smooth, tension=1.5] coordinates {   (1,2) (5, 3) (7,2) };
	\draw  plot [smooth, tension=1.5] coordinates {   (3,2) (5, 3) (9,2) };
	\draw  plot [smooth, tension=1.5] coordinates {  (2,2) (5, 3) (8,2) };
	
	\draw node [below] at (2,-0.5) {$AQ_{n-1}^0$};
	\draw node [below] at (8,-0.5) {$AQ_{n-1}^1$};

	\end{tikzpicture}
	

	\hspace{0.3 in} {$ \mathrm{Figure}\,\, 10.$}
	
\end{center}

{\bf Subcase 2.2.2(b) :} 
If $z$ is adjacent $y^c$ or $y^h$ or both but not adjacent to $x^h, x^c$. Here, with the similar argument in Subcase 2.1.2 of this theorem, we get the required $2n - 3$ pendant $S$-Steiner trees,\\ see Figure 11.

\begin{center}
		\begin{tikzpicture}[scale=1]
	
	\draw [style = thin](0,0)--(0,6);
	\draw [style = thin](0,0)--(4,0);
	\draw [style = thin](0,6)--(4,6);
	\draw [style = thin](4,0)--(4,6);
	
	\draw [style = thin](6,0)--(6,6);
	\draw [style = thin](6,0)--(10,0);
	\draw [style = thin](6,6)--(10,6);
	\draw [style = thin](10,0)--(10,6);

	\draw [fill=black] (2,5) circle  (.08)  node [above]  at (2,5) {{$y$}};

	\draw [fill=black] (2,1) circle  (.08)  node [below]  at (2,1) {{$x$}}; 
	\draw [fill=black] (2.5,2) circle  (.08);
	\draw [fill=black] (1.5,2) circle  (.08);
	\draw [fill=black] (1,2) circle  (.08) node [left]  at (1,2) {{$x_1$}};
	\draw [fill=black] (3,2) circle  (.08) node [right]  at (3,2) {{$x_{2n-3}$}};
	\draw [fill=black] (2,2) circle  (.08);
	
	\draw plot [smooth, tension=1.5] coordinates { (2,1) (2.5,2) };
	\draw plot [smooth, tension=1.5] coordinates {  (2,1) (1.5,2) };
	\draw plot [smooth, tension=1.5] coordinates {   (2,1) (1,2) };
	\draw plot [smooth, tension=1.5] coordinates {   (2,1) (3,2) };
	\draw plot [smooth, tension=1.5] coordinates {   (2,1) (2,2) };
	
	\draw plot [smooth, tension=1.5] coordinates { (2.5,2) (2.5,3.5) (2,5)};
	\draw plot [smooth, tension=1.5] coordinates {  (1.5,2) (1.5,3.5) (2,5)};
	\draw plot [smooth, tension=1.5] coordinates {   (1,2) (1,3.5) (2,5)};
	\draw plot [smooth, tension=1.5] coordinates {   (3,2) (3,3.5) (2,5)};
	\draw plot [smooth, tension=1.5] coordinates {   (2,2) (2,3.5) (2,5)};
	
	
	\draw [fill=black] (8,5) circle  (.08)  node [above]  at (8,5) {{$z$}}; 
	\draw [fill=black] (8.5,2) circle  (.08);
	\draw [fill=black] (7.5,2) circle  (.08);
	\draw [fill=black] (7,2) circle  (.08) node [left]  at (7,2) {{$x_1^h$}};
	\draw [fill=black] (9,2) circle  (.08) node [right]  at (9,2) {{$x_{2n-3}^h$}};
	\draw [fill=black] (8,2) circle  (.08);
	
	\draw [fill=black] (8,1) circle  (.08)  node [below]  at (8,1) {{$x^h$}};
	
	\draw [style = dotted] plot [smooth, tension=1.5] coordinates { (8,1) (8.5,2) };
	\draw [style = dotted] plot [smooth, tension=1.5] coordinates {  (8,1) (7.5,2) };
	\draw [style = dotted] plot [smooth, tension=1.5] coordinates {   (8,1) (7,2) };
	\draw [style = dotted] plot [smooth, tension=1.5] coordinates {   (8,1) (9,2) };
	\draw [style = dotted] plot [smooth, tension=1.5] coordinates {  (8,1) (8,2) };
	
	\draw plot [smooth, tension=1.5] coordinates { (8.5,2) (8.5 , 3.5) (8,5)};
	\draw plot [smooth, tension=1.5] coordinates {  (7.5,2) (7.5 , 3.5) (8,5)};
	\draw plot [smooth, tension=1.5] coordinates {   (7,2) (7 , 3.5) (8,5)};
	\draw plot [smooth, tension=1.5] coordinates {   (9,2) (9 , 3.5) (8,5)};
	\draw plot [smooth, tension=1.5] coordinates {   (8,2) (8 , 3.5) (8,5)};

	\draw  plot [smooth, tension=1.5] coordinates { (2.5,2) (5, 3
		) (8.5,2) };
	\draw  plot [smooth, tension=1.5] coordinates {  (1.5,2) (5, 3) (7.5,2) };
	\draw  plot [smooth, tension=1.5] coordinates {   (1,2) (5, 3) (7,2) };
	\draw  plot [smooth, tension=1.5] coordinates {   (3,2) (5, 3) (9,2) };
	\draw  plot [smooth, tension=1.5] coordinates {  (2,2) (5, 3) (8,2) };
	
	\draw node [below] at (2,-0.5) {$AQ_{n-1}^0$};
	\draw node [below] at (8,-0.5) {$AQ_{n-1}^1$};	
	
	\end{tikzpicture}
	

	\hspace{0.5 in} {$ \mathrm{Figure}\,\, 11.$}
\end{center}

In the same way, we get $2n - 3$ pendant $S$-Steiner trees if $z$ is adjacent to $x^h$ or $x^c$ or both but not adjacent to $y^h, y^c$.\\ 

\noindent
{\bf Subcase 2.2.2(c) :} If $z$ is adjacent to $y^h$ only or to $y^h, x^c, x^h$ then use a path cover of order $2n -3$ in between $y^c$ instead of $y^h$ and $z$ in $AQ_{n-1}^1$ and get the required result as similar to Subcase 2.2.2(a).\\ Similarly we get $2n - 3$ pendant $S$-Steiner trees if  $z$ is adjacent to $x^h$ only or to $x^h,~ y^c,~ y^h.$\\

{\bf Subcase 2.2.3 :} Assume now that $x$ is adjacent to $y.$\\

{\bf Subcase 2.2.3(a) :} Suppose $z$ is not adjacent to all $x^h, x^c, y^h$ and $y^c.$
We have a path cover $P_1, P_2, \dots, P_{2n-3}$ in between $x$ and $y$ in $AQ_{n-1}^0.$ Let $y_1, y_2, \dots, y_{2n-3}$ be the neighbours of $y$ along $P_1, P_2, \dots, P_{2n-3}$ respectively. Since $x$ is adjacent to $y,$ without loss of generality, suppose $y_1 = x.$ Hence $y_1^h = x^h$. Now, in $AQ_{n-1}^1,$ we get a path cover $Q_1, Q_2, \dots, Q_{2n - 3}$ in between $y^h$ and $z$ such that the neighbours $y^h_1, y^h_2, \dots, y^h_{2n -3} $ of $y^h$ lie on $Q_1, Q_2, \dots, Q_{2n - 3}$ respectively. Thus, the required $2n - 3$ pendant $S$-Steiner trees $T_1, T_2, \dots, T_{2n -3}$ are obtained as follows:\\

$T_i = P_i \cup \{Q_i\backslash <y^h~,~ y_i^h>\}~ \cup <y_i ~,~ y^h_i>,$ for $2 \leq i \leq 2n -3$ and

$T_1 = \,\,<x~,~x^h> \cup <y~,~y^h> \cup~ Q_1,$ see Figure 12.

\begin{center}
	\begin{tikzpicture}[scale=1]
	
	\draw [style = thin](0,0)--(0,6);
	\draw [style = thin](0,0)--(4,0);
	\draw [style = thin](0,6)--(4,6);
	\draw [style = thin](4,0)--(4,6);
	
	\draw [style = thin](6,0)--(6,6);
	\draw [style = thin](6,0)--(10,0);
	\draw [style = thin](6,6)--(10,6);
	\draw [style = thin](10,0)--(10,6);
	%
	
	\draw [fill=black] (1,1) circle  (.08)  node [right]  at (0.8,0.6) {{$y$}};
	\draw [fill=black] (1,5) circle  (.08)  node [above]  at (1,5) {{$x=y_1$}}; 
	\draw [fill=black] (1.5,2) circle  (.08) node [left]  at (1.5,2) {{$y_2$}};
	\draw [fill=black] (2,2) circle  (.08) ;
	\draw [fill=black] (2.5,2) circle  (.08);
	\draw [fill=black] (3,2) circle  (.08) node [right]  at (3,2) {{$y_{2n-3}$}};

	\draw  [style = dotted] plot [smooth, tension=1.5] coordinates { (1,1) (1,5) };
	\draw  plot [smooth, tension=1.5] coordinates { (1,1) (1.5,2) };
	\draw  plot [smooth, tension=1.5] coordinates { (1,1) (2,2) };
	\draw  plot [smooth, tension=1.5] coordinates { (1,1) (2.5,2) };
	\draw  plot [smooth, tension=1.5] coordinates { (1,1) (3,2) };

	\draw plot [smooth, tension=1.5] coordinates { (1,5) (1.7 , 3.5) (1.5,2)};
	\draw plot [smooth, tension=1.5] coordinates {  (1,5) (2 , 3.5) (2,2)};
	\draw plot [smooth, tension=1.5] coordinates {   (1,5) (2.2 , 3.5) (2.5,2)};
	\draw plot [smooth, tension=1.5] coordinates {   (1,5) (2.5 , 3.5) (3,2)};

	
	\draw [fill=black] (7,2) circle (.08) node [left]  at (7,1.7) {{$y_1^h
			($=$x^h)$}};	
	\draw [fill=black] (7.5,2) circle  (.08) node [below]  at (7.7,2.09) {{$y_2^h$}};	
	\draw [fill=black] (8,1) circle  (.08)  node [below]  at (8,1) {{$y^h$}};
	\draw [fill=black] (8,2) circle  (.08);
	\draw [fill=black] (8,5) circle  (.08)  node [above]  at (8,5) {{$z$}};
	\draw [fill=black] (8.5,2) circle  (.08);
	\draw [fill=black] (9,2) circle  (.08) node [right]  at (9,2) {{$y_{2n-3}^h$}};

	\draw [style = dotted] plot [smooth, tension=1.5] coordinates { (8,1) (8.5,2) };
	\draw [style = dotted] plot [smooth, tension=1.5] coordinates { (8,1) (7.5,2) };
	\draw [style = thick] [blue] plot [smooth, tension=1.5] coordinates { (8,1) (7,2) };
	\draw [style = dotted] plot [smooth, tension=1.5] coordinates {  (8,1) (9,2) };
	\draw [style = dotted] plot [smooth, tension=1.5] coordinates {  (8,1) (8,2) };
	
	\draw plot [smooth, tension=1.5] coordinates { (8.5,2) (8.5,3.5) (8,5)};
	\draw plot [smooth, tension=1.5] coordinates {  (7.5,2) (7.5,3.5) (8,5)};
	\draw [style = thick][blue]  plot [smooth, tension=1.5] coordinates {   (7,2) (7,3.5) (8,5)};
	\draw plot [smooth, tension=1.5] coordinates {   (9,2) (9,3.5) (8,5)};
	\draw plot [smooth, tension=1.5] coordinates {   (8,2) (8,3.5) (8,5)};


	\draw  plot [smooth, tension=1.5] coordinates { (2.5,2) (6, 3) (8.5,2) };
	\draw  plot [smooth, tension=1.5] coordinates {  (1.5,2) (5, 3) (7.5,2) };
	\draw [style = thick] [blue] plot [smooth, tension=1.5]  coordinates {   (1,1) (5, 0.5) (8,1) };
	\draw  plot [smooth, tension=1.5] coordinates {   (3,2) (6.5, 3) (9,2) };
	\draw  plot [smooth, tension=1.5] coordinates {  (2,2) (5.5, 3) (8,2) };
	\draw [style = thick] [blue] plot [smooth, tension=1.5] coordinates {  (1,5) (4.5, 3) (7,2) };

	\draw node [below] at (2,-0.5) {$AQ_{n-1}^0$};
	\draw node [below] at (8,-0.5) {$AQ_{n-1}^1$};
	
	\end{tikzpicture}
	
	
	\hspace{0.5 in} {$ \mathrm{Figure}\,\, 12.$}
	
\end{center}

{\bf Subcase 2.2.3(b) :} 
Suppose $z$ is adjacent $y^c$ or $y^h$ or both but not adjacent to $x^h$ and $ x^c$. Here, we will take a path cover $P_1, P_2, \dots, P_{2n-3}$ in between $x$ and $y$ in $AQ_{n-1}^0.$ Let $x_1, x_2, \dots, x_{2n-3}$ be the neighbours of $x$ along $P_1, P_2, \dots, P_{2n-3}$ respectively. Also, in $AQ_{n-1}^1,$ we get a path cover $Q_1, Q_2, \dots, Q_{2n - 3}$ in between $x^h$ and $z$ such that neighbours $x^h_1, x^h_2, \dots, x^h_{2n -3} $ of $x^h$ lie on $Q_1, Q_2, \dots, Q_{2n - 3}$ respectively. Since $x$ is adjacent to $y,$ without loss of generality, suppose $x_1 = y.$ Which gives us $P_1 = < x~ , ~y>$ and $x_1^h = y^h.$ Thus the required $2n - 3$ pendant $S$-Steiner trees $T_1, T_2, \dots, T_{2n -3}$ are constructed as follows:\\

\noindent
$T_i = P_i \cup \{Q_i\backslash <x^h~,~ x_i^h>\}~ \cup <x_i~ ,~ x^h_i>,$ for $2 \leq i \leq 2n -3.$ and 

\noindent
$T_1 =\,\, <x ~, ~ x^h> \cup <y ~, ~ y^h> \cup~ Q_1,$ see Figure 13.\\

\begin{center}
		\begin{tikzpicture}[scale= 1]
	
	\draw [style = thin](0,0)--(0,6);
	\draw [style = thin](0,0)--(4,0);
	\draw [style = thin](0,6)--(4,6);
	\draw [style = thin](4,0)--(4,6);
	
	\draw [style = thin](6,0)--(6,6);
	\draw [style = thin](6,0)--(10,0);
	\draw [style = thin](6,6)--(10,6);
	\draw [style = thin](10,0)--(10,6);
	%
	
	\draw [fill=black] (1,1) circle  (.08)  node [right]  at (0.8,0.6) {{$y(= x_1)$}};
	\draw [fill=black] (1,5) circle  (.08)  node [above]  at (1,5) {{$x$}}; 
	\draw [fill=black] (1.5,4) circle  (.08) node [left]  at (1.5,4) {{$x_2$}};
	\draw [fill=black] (2,4) circle  (.08) ;
	\draw [fill=black] (2.5,4) circle  (.08);
	\draw [fill=black] (3,4) circle  (.08) node [right]  at (3,4) {{$x_{2n-3}$}};

	\draw  [style = dotted] plot [smooth, tension=1.5] coordinates { (1,1) (1,5) };
	\draw  plot [smooth, tension=1.5] coordinates { (1,5) (1.5,4) };
	\draw  plot [smooth, tension=1.5] coordinates { (1,5) (2,4) };
	\draw  plot [smooth, tension=1.5] coordinates { (1,5) (2.5,4) };
	\draw  plot [smooth, tension=1.5] coordinates { (1,5) (3,4) };

	\draw plot [smooth, tension=1.5] coordinates { (1,1) (1.5 , 2.7) (1.5,4)};
	\draw plot [smooth, tension=1.5] coordinates {  (1,1) (2 , 2.7) (2,4)};
	\draw plot [smooth, tension=1.5] coordinates {   (1,1) (2.2 , 2.7) (2.5,4)};
	\draw plot [smooth, tension=1.5] coordinates {   (1,1) (2.5 , 2.7) (3,4)};


	\draw [fill=black] (8,5) circle  (.08)  node [above]  at (8,5) {$x^h$}; 
	\draw [fill=black] (8.5,4) circle  (.08);
	\draw [fill=black] (7.5,4) circle  (.08);
	\draw [fill=black] (7,4) circle  (.08) node [left]  at (7,4) {{$x_1^h (= y^h)$}};
	\draw [fill=black] (9,4) circle  (.08) node [right]  at (9,4) {{$x_{2n-3}^h$}};
	\draw [fill=black] (8,4) circle  (.08);
	
	\draw [fill=black] (8,1) circle  (.08)  node [below]  at (8,1) {$z$};
	
	\draw [style = dotted] plot [smooth, tension=1.5] coordinates { (8,5) (8.5,4) };
	\draw [style = dotted] plot [smooth, tension=1.5] coordinates {  (8,5) (7.5,4) };
	\draw  [style = thick] [blue] plot [smooth, tension=1.5] coordinates {   (8,5) (7,4) };
	\draw [style = dotted] plot [smooth, tension=1.5] coordinates {   (8,5) (9,4) };
	\draw [style = dotted] plot [smooth, tension=1.5] coordinates {  (8,5) (8,4) };
	
	\draw plot [smooth, tension=1.5] coordinates { (8.5,4) (8.5 , 2.5) (8,1)};
	\draw plot [smooth, tension=1.5] coordinates {  (7.5,4) (7.5 , 2.5) (8,1)};
	\draw [style = thick] [blue] plot [smooth, tension=1.5] coordinates {   (7,4) (7 , 2.5) (8,1)};
	\draw plot [smooth, tension=1.5] coordinates {   (9,4) (9 , 2.5) (8,1)};
	\draw plot [smooth, tension=1.5] coordinates {   (8,4) (8 , 2.5) (8,1)};


	\draw  plot [smooth, tension=1.5] coordinates { (2.5,4) (6, 3) (8.5,4) };
	\draw  plot [smooth, tension=1.5] coordinates {  (1.5,4) (5, 3) (7.5,4) };
	\draw [style = thick] [blue] plot [smooth, tension=1.5]  coordinates {   (1,1) (4.5, 2.5) (7,4) };
	\draw  plot [smooth, tension=1.5] coordinates {   (3,4) (6.5, 3) (9,4) };
	\draw  plot [smooth, tension=1.5] coordinates {  (2,4) (5.5, 3) (8,4) };
	\draw [style = thick] [blue] plot [smooth, tension=1.5] coordinates {  (1,5) (4.5, 5.5) (8,5) };

	\draw node [below] at (2,-0.5) {$AQ_{n-1}^0$};
	\draw node [below] at (8,-0.5) {$AQ_{n-1}^1$};
	
	\end{tikzpicture}
	
	
	\hspace{0.1 in} {$ \mathrm{Figure}\,\, 13.$}
	
\end{center}

In the same way, we get $2n - 3$ pendant $S$-Steiner trees if $z$ is adjacent to $x^h$ or $x^c$ or both but not adjacent to $y^h, y^c$.\\

{\bf Subcase 2.2.3(c) :} If $z$ is adjacent to $y^h$ only or to $y^h, x^c, x^h$ then use a path cover of order $2n -3$ in between $y^c$ instead of $y^h$ and $z$ in $AQ_{n-1}^1$ and get the required result as similar to Subcase 2.2.3(a).\\ Similarly We get $2n - 3$ pendant $S$-Steiner trees if  $z$ is adjacent to $x^h$ only or to $x^h, y^c, y^h.$ 

Hence by the principle of mathematical induction, we get that $\tau_{3}(AQ_n) = 2n - 3$.
\end{proof}

\section{Concluding Remarks:}
In this paper, we get pendant $3$-tree connectivity of $AQ_n$ i.e. $\tau_3(AQ_n) = 2n -3.$ Evaluations of $\tau_k(AQ_n)$ for $k \geq 4$ are still open.

\centerline{\textbf{Acknowledgement}}
The first author gratefully acknowledges the Department of Science and Technology, New Delhi, India for the award of Women Scientist Scheme(SR/WOS-A/PM-79/2016) for research in Basic/Applied Sciences.

\end{document}